\documentclass[11pt,a4paper,reqno]{amsart}
\usepackage{amsaddr}
\usepackage{amsmath,amsfonts,amssymb,amsthm,mathrsfs}
\usepackage[utf8]{inputenc}
\usepackage[T1]{fontenc}
\usepackage{lmodern}
\usepackage{latexsym}
\usepackage{cancel}
\usepackage{microtype}
\usepackage{caption}
\usepackage{MnSymbol}
\usepackage{xcolor}
\usepackage{enumerate}
\usepackage[normalem]{ulem}
\usepackage{bbm}
\usepackage{geometry}
\usepackage{marginnote}
\usepackage{mathrsfs}

\setlength{\oddsidemargin}{10pt}
\setlength{\evensidemargin}{10pt}
\setlength{\textwidth}{6.0in}
\setlength{\topmargin}{0in}
\setlength{\textheight}{8.5in}
\setlength{\parskip}{4px}

\usepackage[colorlinks=true,linkcolor=blue,citecolor=magenta]{hyperref}

\usepackage{graphics,tikz}

\newcommand{\GG}{{\mathcal  G}}

\newcommand{\MM}{{\mathcal  M}}

\newcommand{\BR}{{\mathbb R}}

\def\XXint#1#2#3{{\setbox0=\hbox{$#1{#2#3}{\int}$ }
\vcenter{\hbox{$#2#3$ }}\kern-.6\wd0}}

\newtheorem{theorem}{\bf Theorem}[section]
\newtheorem{proposition}[theorem]{\bf Proposition}
\newtheorem{lemma}[theorem]{\bf Lemma}

\theoremstyle{definition}
\newtheorem{definition}[theorem]{Definition}
\newtheorem{example}[theorem]{\bf Example}

\numberwithin{equation}{section}

\begin{document}

\title[Semi-linear elliptic equation with measure data]{Generalized solutions to  semilinear elliptic equations with measure data}

\maketitle
\begin{center}

  \normalsize
  TOMASZ KLIMSIAK\footnote{e-mail: {\tt tomas@mat.umk.pl}}\textsuperscript{1,2}
   \par \bigskip

  \textsuperscript{1} {\small Institute of Mathematics, Polish Academy Of Sciences,\\
 \'{S}niadeckich 8,   00-656 Warsaw, Poland} \par \medskip

  \textsuperscript{2} {\small Faculty of
Mathematics and Computer Science, Nicolaus Copernicus University,\\
Chopina 12/18, 87-100 Toru\'n, Poland }\par
\end{center}

\begin{abstract}
We address  an open problem posed by H. Brezis, M. Marcus and A.C. Ponce in:
{\em Nonlinear elliptic equations
with measures revisited. In: Mathematical Aspects of Nonlinear
Dispersive Equations (J. Bourgain, C. Kenig, S. Klainerman, eds.),
Annals of Mathematics Studies, {\bf 163} (2007)}. We prove that
for any bounded Borel measure $\mu$ on a smooth bounded domain $D\subset\mathbb R^d$
and asymptotically convex non-decreasing  non-negative  continuous function $g$ on $\mathbb R$
the sequence of solutions to the semi-linear   equation (P): $-\Delta u+g(u)=\rho_n\ast\mu$ ($\rho_n$ is a mollifier)
that is subject to homogeneous Dirichlet condition, converges to the function that solves (P)
with $\rho_n\ast\mu$  replaced by the {\em reduced measure} $\mu^*$ (metric projection onto the space of {\em good measures}).
We also provide a corresponding version of this  result  without non-negativity assumption on $g$.
\end{abstract}
\maketitle


\section{Introduction}
\label{sec1}
Let $D\subset \mathbb R^d$, $d\ge2$,
be a bounded domain with  smooth boundary,
$\mu$ be a bounded Borel measure on $D$ and
\begin{enumerate}
\item[(H)] $f:D\times\mathbb R\to\mathbb R$ be a Carath\'eodory function that is
non-increasing with respect to the second variable,
and $f(\cdot,y)\in L^1(D)$ for any $ y\in\mathbb R$.
\end{enumerate}
The present paper is  concerned with    the   Dirichlet problem  for nonlinear Poisson equation
\begin{equation}
\label{eq1.1}
-\Delta u=f(\cdot,u)+\mu \quad\text{in } D,\quad u=0\quad\text{on } \partial D.
\end{equation}

In 1975 (see the introduction in \cite{BB}) B\'enilan and  Brezis  discovered that in general  under merely condition (H)
there may not exist a solution to \eqref{eq1.1} even if $f$  admits a polynomial growth.
It appears  that if $\mu$ is above some level of concentration (determined by the Newtonian capacity)
then the study of \eqref{eq1.1} is highly non-trivial and non-existence phenomenon occurs.
On the other hand, in  many  interesting models equations of type \eqref{eq1.1} with polynomial or exponential growth absorption
term $f$  and highly concentrated measure $\mu$, as Dirac mass, appear (see, e.g., \cite{BB,LPY} and the references therein).
Brezis, Marcus and Ponce \cite[page 24]{BMP} posed the following natural problem. Consider the functions  $u_n$ solving the Dirichlet problems
\begin{equation}
\label{eq1.2}
-\Delta u_n=f(\cdot,u_n)+\rho_n\ast \mu \quad\text{in } D,\qquad u_n=0\quad\text{on } \partial D,
\end{equation}
where $(\rho_n)$ is a sequence  of smooth mollifiers. What can be said about the convergence of the sequence $(u_n)$ and, in case of convergence, about the form of the equation satisfied by the limit function?
Clearly, it cannot be \eqref{eq1.1} since in general there is no solution to \eqref{eq1.1}.
It is worth noting here that by  Stampacchia's inequality (see \eqref{eq2.4})
and the Rellich--Kondrachov theorem, $(u_n)$ is always convergent up  to a subsequence.

It turned out  that this is a quite difficult problem and  it   remained open to this day.
The subtlety of the problem is well exhibited by the fact that in general the limit of solutions $(u_n)$ to \eqref{eq1.2} with $\rho_n\ast \mu$ replaced by an approximation $(\mu_n)$ of $\mu$ in the narrow topology,
if  exists,
may vary depending on the choice of the sequence $(\mu_n)$ (see \cite[Remark 10.1]{MP}).

Interestingly, the problem is simplified when,  instead of regularization of $\mu$,   an approximation  $(f_n)$
of $f$ guaranteeing  the unique solvability of the problem
\[
-\Delta v_n=f_n(\cdot,v_n)+\mu \quad\text{in } D,\qquad v_n=0\quad\text{on } \partial D,
\]
is considered.
In \cite{BMP1,BMP} Brezis, Marcus and Ponce introduced the notion of  {\em reduced measure}.
They proved that under the additional assumption
They proved that under the additional assumption
\begin{enumerate}
\item[(B)] $f(x,y)=0,\ y\le 0$ $m$-a.e. $x\in D$
\end{enumerate}
there exists a maximal measure $\mu^{*,f}\le\mu$, called the reduced measure,
for which there exists a unique solution to
\begin{equation}
\label{eq.gw1}
-\Delta u=f(\cdot,u)+\mu^{*,f} \quad\text{in } D,\qquad u=0\quad\text{on } \partial D,
\end{equation}
and moreover, independently of the approximate  sequence $(f_n)$, $v_n\to u^{*,f}$
in $L^1(D)$. This legitimates referring to the unique solution of  \eqref{eq.gw1}  as a  generalized solution to \eqref{eq1.1}
(see comments preceding   \cite[Theorem 1]{BP}).
In \cite{BMP}  the authors  called {\em good measures}  (relative to $f)$ those bounded (signed) Borel measures for which  $\mu=\mu^{*,f}$. In
other words,  in their terminology, good measures are exactly those bounded Borel measures for which there exists a solution to  \eqref{eq1.1}. We denote the class of good measures  by $\GG(f)$.

The present paper is devoted to    the open problem posed by Brezis, Marcus and Ponce.
Let us mention that a partial answer to it has been already given in \cite[Theorem 4.11]{BMP}, where it is proved that if $f$ satisfies (B) and additionally $g:=-f$ is  convex and independent of the spatial variable, 
then
\[
u_n\to u^{*,f} \quad\text{in } L^1(D).
\]
Let $\mathcal M_b(D)$ denote the set of (signed) bounded Borel measures on $D$ equipped with the metric determined  by the total variation norm.
The main result
of the present paper states that  if   (H) and  the following condition:
\begin{enumerate}
\item[(UI)] for any function $w\in W^{1,q}_0(D),\, q\in [1,d/(d-1))$ that is  non-negative or non-positive in $D$ the following implication holds:
\[
\text{if} \quad f(\cdot,w)\in L^1(D)\quad \text{then}\quad (f(\cdot,\rho_n\ast w))_{n\ge 1}\quad\text{is uniformly integrable},
\] 
\end{enumerate}
are satisfied, then 
\[
u_n\to u^{\pi,f} \quad\text{in } L^1(D),
\]
where $u^{\pi,f}$ is the unique solution to the problem
\begin{equation}
\label{eq1.3}
-\Delta u=f(\cdot,u)+\Pi_f(\mu) \quad\text{in } D,\qquad u=0\quad\text{on } \partial D,
\end{equation}
and the mapping
\[
\Pi_f:\MM_b(D)\to\GG(f)
\]
is the unique continuous metric projection onto $\GG(f)$ such that
\[
\Pi_f(\mu+\nu)=\Pi_f(\mu)+\Pi_f(\nu)\quad\text{for any } \mu,\nu\in\MM_b(D) \text{ with } \mu\bot\nu.
\]
In particular, if $f$ satisfies (B)  (or, more generally, if there exists a subsolution to \eqref{eq1.1}),
then $\Pi_f(\mu)=\mu^{*,f}$. This means, in particular,  that the convergence proved in \cite[Theorem 4.11]{BMP} holds with the convexity assumption on $g$ replaced by (UI).
As a by-product of our results, we have that $(u_n)$ is convergent as a whole sequence.
Let us mention here that  if $g$ is independent of the spatial variable and  asymptotically convex  (recall that $g=-f$), i.e. 
\begin{enumerate}
\item[(AC)] there exists a convex function $\varphi:\mathbb R^+\to\mathbb R^+$ such that 
\[
\lim_{u\to\infty} \frac{g(u)}{\varphi(u)}=1,
\] 
\end{enumerate}
then (UI) holds (see Example \ref{ex1.11}). 

The usefulness of the aforementioned result, beyond its  theoretical value, is that it provides  practical tools for studying problems  of type \eqref{eq1.1}.
This is because we already have a fairly satisfactory  knowledge on the objects $\GG(f)$ and $\Pi_f$ involved in (\ref{eq1.3}).
Firstly, in \cite[Corollary 7.3]{K:arx} (see also \cite[Theorem 5.13]{K:CVPDE}) it has been proven that
\[
\overline{\mathcal A(f)}=\GG(f),
\]
where the closure is taken in the total variation norm, and
\begin{equation}
\label{eq1.4}
\mathcal A(f):=\{\mu\in \MM_b(D): f(\cdot,G_D\mu)\in L^1(D)\}.
\end{equation}
Here $G_D$ is Green's function for $D$ and $G_D\mu(x)=\int_D G_D(x,y)\,\mu(dy),\, x\in D$.
As a result,  for any function $g$ satisfying (H) the following implication holds:
\[
\mathcal A(f)=\mathcal A(g)\quad\Rightarrow \quad \Pi_f=\Pi_g.
\]
The antecedent of the above implication can be verified directly using \eqref{eq1.4}.
Secondly, from \cite[Theorem 6.23]{K:arx} (see also \cite[Theorem 4.15]{BMP}) it follows that for any $\mu\in\MM_b(D)$,
\[
\Pi_f(\mu)=(\mu^+)^{*,f}-(\mu^-)^{*,\tilde f},
\]
where $\tilde f(\cdot,y):= -f(\cdot,-y),\, x\in D, y\in\mathbb R$. Therefore, to calculate
$\Pi_f(\mu)$, it is enough to focus on the reduction operator $\mu\mapsto \mu^{*,f}$
that has been  studied in several papers (see, e.g., \cite{BLO,BMP,BP,DPP,K:CVPDE,K:arx,MP,PP}).

In the proof of the main result of the paper we utilize a series of  results on the  reduced measures and reduced limits
proved in \cite{BMP,MP} as well as the following fact proved recently in \cite[Theorem 7.2]{K:arx}: if $\mu\in\mathcal G(f)$,
then $w_n\to G^D\mu$ in $L^1(D)$ and $f(\cdot,w_n)/n\to 0$ in $L^1(D)$, where
\[
-\Delta w_n=\frac1n f(\cdot,w_n)+\mu,\qquad w_n=0\quad\text{on } \partial D.
\]
As a result, $\nu_n:=\frac1n f(\cdot,w_n)+\mu\in \mathcal A(f),\, n\ge 1$ and $\|\nu_n-\mu\|_\upsilon\le\frac1n\|f(\cdot,w_n)\|_{L^1(D)}\to 0$, where
$\|\mu\|_{\upsilon}$ stands for  the total variation norm of $\mu$.

\section{Notation and basic notions}

Throughout the paper, we fix a function  $f: D\times  \mathbb R\to \mathbb R$ that satisfies (H).
We denote by $\MM_b(D)$  the set of all bounded Borel measures on $D$,
and by $\MM^+_b(D)$  its subset consisting of positive measures ($\mu(A)\ge 0,\, A\in\mathcal B(D)$).
For $\mu\in\MM_b(D)$ we set $\|\mu\|_{\upsilon}:=|\mu|(D)$, where $|\mu|$ stands for the total variation measure of $\mu$
($|\mu|=\mu^++\mu^-$). The set $\MM_b(D)$ with the norm $\|\cdot\|_{\upsilon}$ is a Banach space.
We say that  $(\mu_n)\subset\MM_b(D)$ converges narrowly to $\mu\in\MM_b(D)$ if
\[
\int_D\eta\,d\mu_n\to \int_D\eta\,d\mu,\quad \eta\in C_b(D).
\]

\subsection{Definition of a solution, a priori estimates}

\begin{definition}
A function $u\in L^1(D)$ is a solution to \eqref{eq1.1} if
 $f(\cdot,u)\in L^1(D)$ and for any $\eta\in \mathcal C:=\{u\in C^2(\overline D): u=0 \text{ on } \partial D\}$,
\[
-\int_Du\Delta\eta=\int_D f(\cdot,u)\eta+\int_D\eta\,d\mu.
\]
\end{definition}

Below we recall some equivalent definitions.
A function  $u\in L^1(D)$ is a solution to \eqref{eq1.1} if and only if (see, e.g., \cite[Proposition 6.3]{Ponce})
$f(\cdot,u)\in L^1(D)$, $u\in W^{1,1}_0(D)$ and
for any $\eta\in C_c^\infty(D)$,
\[
-\int_Du\Delta\eta=\int_D f(\cdot,u)\eta+\int_D\eta\,d\mu.
\]
A function  $u\in L^1(D)$ is a solution to \eqref{eq1.1} if and only if (see, e.g., \cite[Theorem 1.2.2]{MV}), $f(\cdot,u)\in L^1(D)$ and
for a.e. $x\in D$,
\begin{equation}
\label{eq2.g1}
u(x)=\int_DG_D(x,y)f(y,u(y))\,dy+\int_DG_D(x,y)\,\mu(dy).
\end{equation}
Here $G_D$ denotes Green's function for $D$.

The following results are  well known.

\begin{proposition}
\label{prop2.2}
\begin{enumerate}[\rm(i)]
\item
For any solution $w$ to \eqref{eq1.1} and any $q\in [1,d/(d-1))$ we have
\begin{equation}
\label{eq2.4}
\|w\|_{W^{1,q}_0(D)}+\|f(\cdot,w)\|_{L^1(D)}\le C(\|f(\cdot,0)\|_{L^1(D)}+\|\mu\|_{TV}),
\end{equation}
where $C$ depends only on $q,d$ and $D$.
\item
Let $f_1, f_2$ satisfy \mbox{\rm(H)} and $\mu_1,\mu_2\in\MM_b(E)$. Let $u_1,u_2$ be
solutions to \eqref{eq1.1} with $(f,\mu)$ replaced by $(f_1,\mu_1), (f_2,\mu_2)$, respectively.
If $\mu_1\le\mu_2$ and $f_1(x,y)\le f_2(x,y)$ for any $y\in\mathbb R$ and a.e. $x\in D$, then $u_1\le u_2$ a.e.
\end{enumerate}
\end{proposition}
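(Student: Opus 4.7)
For part (i), the natural starting point is to treat the semi-linear equation as a linear problem with modified data. Setting $g:=f(\cdot,w)\in L^1(D)$, one reads the definition of solution as saying that $w$ solves the linear Poisson problem $-\Delta w=g+\mu$ with zero Dirichlet data. The classical Stampacchia estimate for linear Poisson equations with $L^1$-plus-measure data then yields
\[
\|w\|_{W^{1,q}_0(D)}\le C_q\bigl(\|g\|_{L^1(D)}+\|\mu\|_{\upsilon}\bigr),\qquad q\in[1,d/(d-1)),
\]
so the task reduces to estimating $\|f(\cdot,w)\|_{L^1(D)}$.

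The key observation for controlling $\|f(\cdot,w)\|_{L^1(D)}$ is Kato's inequality applied in the form: for $\sgn_\varepsilon$ a smooth odd non-decreasing approximation of $\sgn$ (e.g. $\sgn_\varepsilon(s)=s/\varepsilon$ on $[-\varepsilon,\varepsilon]$ extended by $\pm1$), one has, after testing against $\sgn_\varepsilon(w)$ in the weak formulation and noting $\int|\nabla w|^2\,\sgn_\varepsilon'(w)\ge0$,
\[
\int_D \sgn(w)\,f(\cdot,w)+\int_D\sgn(w)\,d\mu\ge 0,
\]
in the limit $\varepsilon\searrow0$. Combining this with the monotonicity assumption (H), which forces $\sgn(w)\bigl(f(\cdot,w)-f(\cdot,0)\bigr)\le 0$ pointwise and hence $|f(\cdot,w)-f(\cdot,0)|=-\sgn(w)(f(\cdot,w)-f(\cdot,0))$, one obtains
\[
\int_D|f(\cdot,w)|\le 2\|f(\cdot,0)\|_{L^1(D)}+\|\mu\|_{\upsilon}.
\]
Plugging this back into Stampacchia's inequality delivers (i).

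For part (ii), the strategy is the standard Kato-based comparison. Setting $w:=u_1-u_2$, one subtracts the two equations and applies Kato's inequality to $w^+$ to obtain
\[
-\Delta w^+\le \mathbf{1}_{\{w>0\}}\bigl[(f_1(\cdot,u_1)-f_2(\cdot,u_2))+(\mu_1-\mu_2)\bigr]
\]
in $\mathcal D'(D)$. On $\{w>0\}$, the non-increasing property of $f_1$ gives $f_1(\cdot,u_1)\le f_1(\cdot,u_2)$, which combined with $f_1\le f_2$ yields $f_1(\cdot,u_1)-f_2(\cdot,u_2)\le0$; the assumption $\mu_1\le\mu_2$ handles the measure term. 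Hence $-\Delta w^+\le 0$ in $\mathcal D'(D)$ with vanishing trace on $\partial D$, and the weak maximum principle forces $w^+\equiv 0$, i.e.\ $u_1\le u_2$ a.e.

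The only genuinely delicate point is the rigorous justification of the Kato-type manipulations when $\mu$ is merely a bounded measure and $w\in W^{1,q}_0(D)$ with $q<d/(d-1)$. Since $\sgn(w)$ is not admissible as a test function, one must use the Lipschitz regularizations $\sgn_\varepsilon$ (and analogously for $(w)^+$), verify that they are admissible in the integral-representation formulation \eqref{eq2.g1}, and carefully pass to the limit---in particular controlling the contribution of the singular part of $\mu$, for which one invokes the fact that $\sgn_\varepsilon(w)$ is uniformly bounded by $1$ and converges pointwise $|\mu|$-a.e. (up to choosing a quasi-continuous representative of $w$). This is exactly the classical approximation machinery developed by Brezis--Strauss and is the only non-trivial step; both estimates then follow without further work.
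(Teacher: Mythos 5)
The paper does not prove this proposition but simply cites the standard references (\cite[Appendix 4B]{BMP}, \cite[Propositions 5.1, 21.5]{Ponce}, \cite[Corollary 4.B.2]{BMP}), and your sketch reconstructs exactly the arguments found there: Stampacchia's linear estimate plus the $\sgn_\varepsilon$/Kato absorption bound for (i), and Kato's inequality applied to $(u_1-u_2)^+$ together with the weak maximum principle for (ii). Your proposal is correct and follows essentially the same (standard) route, including a fair acknowledgement of the one delicate point, namely justifying the Kato-type manipulations for solutions that are only in $W^{1,q}_0(D)$, $q<d/(d-1)$, via the Brezis--Strauss approximation machinery.
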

\begin{proof}
For (i) see, e.g., \cite[Appendix 4B]{BMP}, \cite[Proposition 5.1, Proposition 21.5]{Ponce},
\cite[Proposition 4.8]{K:CVPDE}). For (ii) see, e.g., \cite[Proposition 4.2]{K:CVPDE}, \cite[Corollary 4.B.2]{BMP}).
\end{proof}

\begin{definition}
A function $u\in L^1(D)$ is a subsolution (supersolution) to \eqref{eq1.1} if
 $f(\cdot,u)\in L^1(D)$ and
for any  $\eta\in \mathcal C^+:=\{\eta\in \mathcal C: \eta(x)\ge 0,\, x\in D\}$,
\[
-\int_Du\Delta\eta\le(\ge)\int_D f(\cdot,u)\eta+\int_D\eta\,d\mu.
\]
\end{definition}

\subsection{Reduced measures}

We denote by $\GG(f)$ the set of good measures (relative to $f$), i.e. the set of all measures $\mu\in\MM_b(D)$ for which there exists
a solution to \eqref{eq1.1}. It is well known that in general $\GG(f)\subsetneq \MM_b(D)$
(see \cite[Remark A.4]{BB}).

We denote by $\mbox{cap}_{H^1}$ the Newtonian capacity on $D$. It is well known (see, e.g., \cite[Lemma 4.A.1]{BMP}) that each measure $\mu\in\MM_b(D)$
admits the following unique decomposition
\[
\mu=\mu_d+\mu_c,
\]
where $\mu_d,\mu_c\in\MM_b(D)$ and $\mu_d\ll\mbox{cap}_{H^1}$, $\mu_c\bot\mbox{cap}_{H^1}$ (they are called the
{\em diffuse part} and the {\em concentrated part} of
$\mu$, respectively). $\MM^0_b(D)$ stands for the set of $\mu\in\MM_b(D)$ such that
$\mu=\mu_d$. By \cite[Corollary 4.B.3]{BMP} (see also \cite[Theorem 4.7]{KR:JFA}),
\[
\MM^0_b(D)\subset \GG(f),
\]
and  as a result, $L^1(D)\subset \GG(f)$ (the last inclusion, however, follows directly from \cite{BS, Konishi}).

Let $\GG_{\prec \mu}(f)$ denote the set of measures $\nu\in \GG(f)$ such that $\nu\le\mu$.
If $\GG_{\prec\mu}(f)\neq\emptyset$, then there exists $\mu^{*,f}\in\GG_{\prec\mu}(f)$ such that
\[
\max \GG_{\prec\mu}(f)=\mu^{*,f}
\]
(see, e.g.,  \cite[Theorem 4.15]{BMP}, \cite[Theorem 5.2]{K:CVPDE}, \cite[Theorem 5.2]{K:arx}). The measure $\mu^{*,f}$ is  called the {\em reduced measure}.
This notion was introduced in 2005  by Brezis, Marcus and Ponce  \cite{BMP}
and further generalized to non-local operators in \cite{K:CVPDE,K:arx}.

Since $f$ is a fixed,  throughout the paper  we  mostly  drop  the superscript $f$ on $\mu^{*,f}$  and write simply $\mu^*$.
Occasionally, however, we will use  full notation to emphasize the dependence of the reduction operator on  $f$.
In the sequel, we frequently use the following properties of the reduction operator:
\begin{equation}
\label{eq.propb}
(\mu^+)^*=(\mu^*)^+,\qquad (\mu_c)^*=(\mu^*)_c,\quad (\mu^*)_d=\mu_d,\qquad |\mu^*|\le |\mu|.
\end{equation}
For proofs we refer to \cite[Theorem 4.10, Corollary  4.10]{BMP} (see also \cite[Theorem 5.10, Proposition 5.4]{K:CVPDE}, \cite[Section 6.2]{K:arx}.

\section{Preparatory results}

Throughout the paper, we fix a smooth function $j:\mathbb R\to [0,\infty)$
such that $j(x)>0$ if $|x|<1$ and $j(x)=0$ if $|x|\ge 1$, and we let
\begin{equation}
\label{eq3.sm}
\rho_n(x):=cn^dj(n|x|),\quad x\in\mathbb R^d,
\end{equation}
where $c:=1/\int_0^1j(r)\alpha_d(r)\,dr$ and
$\alpha_{d}(r):= (2\pi^{d/2}r^{d-1})/\Gamma(d/2)$ (the surface area of the $d$-dimensional sphere of radius $r>0$).

For further study it will be convenient to introduce the following notion.
We denote  by $\mathcal G_{\#}(f,\mu)\subset\GG(f)$ the set  of all $\nu$ for which there exists
a subsequence $(n_k)$ such that $u_{n_k}\to u$ in $L^1(D)$, where
$u_{n_k}$  is the unique solution to
\begin{equation}
\label{eq2.1}
-\Delta u_{n_k}=f(\cdot,u_{n_k})+\rho_{n_k}\ast\mu \quad\text{in } D,\qquad u_{n_k}=0\quad\text{on } \partial D,
\end{equation}
and  $u$ is the unique solution to
\begin{equation}
\label{eq2.2}
-\Delta u=f(\cdot,u)+\nu \quad\text{in } D,\qquad u=0\quad\text{on } \partial D.
\end{equation}

In the remainder of this section, we also assume that $f$  satisfies condition (B)
formulated in the Introduction.

\begin{lemma}
\label{lm2.1}
For any $\nu\in \mathcal G_{\#}(f,\mu)$ we have $\nu\le \mu^*$.
\end{lemma}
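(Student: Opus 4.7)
The plan is to reduce the inequality $\nu \le \mu^*$ to the simpler claim $\nu \le \mu$. By definition $\mathcal G_{\#}(f,\mu) \subset \GG(f)$, so $\nu$ is already a good measure; hence once $\nu \le \mu$ is known, $\nu$ will belong to $\GG_{\prec \mu}(f)$, and the maximality property $\mu^* = \max \GG_{\prec \mu}(f)$ recorded above immediately yields $\nu \le \mu^*$.

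The heart of the argument is therefore to establish $\nu \le \mu$. First I would observe that (H) together with (B) forces $f \le 0$ on $D\times \mathbb R$: indeed (B) gives $f(x,0) = 0$ for a.e.\ $x\in D$, and monotonicity in the second variable propagates this to $f(x,y) \le 0$ for $y > 0$. Passing to a further subsequence (still labelled $n_k$), we may assume $u_{n_k} \to u$ both in $L^1(D)$ and a.e.\ on $D$; continuity of $f(x,\cdot)$ then gives $f(\cdot,u_{n_k}) \to f(\cdot,u)$ a.e.

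To exploit the sign of $f$, I would test equations \eqref{eq2.1} and \eqref{eq2.2} against an arbitrary non-negative $\phi \in C_c^\infty(D)$. Using $L^1$-convergence $u_{n_k}\to u$ for the Laplacian term together with narrow convergence $\rho_{n_k}\ast\mu\to\mu$ for the source term, one identifies
\[
\lim_{k\to\infty} \int_D f(\cdot,u_{n_k})\,\phi\,dx = \int_D f(\cdot,u)\,\phi\,dx + \int_D \phi\,d\nu - \int_D \phi\,d\mu,
\]
while Fatou's lemma applied to the non-negative sequence $-f(\cdot,u_{n_k})\,\phi$ yields
\[
\limsup_{k\to\infty} \int_D f(\cdot,u_{n_k})\,\phi\,dx \le \int_D f(\cdot,u)\,\phi\,dx.
\]
Comparing the two relations gives $\int_D \phi\,d(\nu-\mu) \le 0$ for every non-negative $\phi \in C_c^\infty(D)$, hence $\nu \le \mu$ as measures on $D$, which finishes the argument.

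I do not anticipate any serious obstacle; the decisive mechanism is the Fatou step, which crucially depends on the sign of $f$ granted by (B). Absent (B), the distributional limit of $f(\cdot,u_{n_k})$ could acquire a singular defect concentrated where $u_{n_k}$ blows up, and the finer analysis based on the reduction operator $\Pi_f$ announced in the Introduction would then have to take over.
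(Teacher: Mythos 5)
Your proof is correct and follows essentially the same route as the paper's: pass to the limit in the weak formulation of \eqref{eq2.1} against non-negative test functions, use the sign $f\le 0$ (from (H) and (B)) together with Fatou's lemma to control the nonlinear term, conclude $\nu\le\mu$, and then invoke $\nu\in\GG(f)$ and the maximality of $\mu^*$ over $\GG_{\prec\mu}(f)$. The only cosmetic difference is that the paper tests with $\eta\in\mathcal C^+$ rather than non-negative $\phi\in C_c^\infty(D)$, which changes nothing.
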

\begin{proof}
Since $\nu\in \mathcal G_{\#}(f,\mu)$ there exists $(u_{n_k})$ and $u$ as described in \eqref{eq2.1}, \eqref{eq2.2}.
By the definition of  a solution, for any $\eta\in\mathcal C$,
\[
-\int_D u_{n_k}\Delta \eta-\int_D f(\cdot,u_{n_k})\eta=\int_D \rho_{n_k}\ast\eta\,d\mu.
\]
By \eqref{eq2.4} and Fatou's lemma,
\[
-\int_D u\Delta \eta-\int_D f(\cdot,u)\eta\le\int_D \eta\,d\mu,\quad \eta\in\mathcal C^+.
\]
On the other hand, by \eqref{eq2.2},
\[
-\int_D u\Delta \eta-\int_D f(\cdot,u)\eta=\int_D \eta\,d\nu,\quad \eta\in\mathcal C.
\]
Hence $\nu\le\mu$, and consequently  $\nu\le\mu^*$ since $\nu\in\mathcal G(f)$.
\end{proof}

\begin{proposition}
\label{prop3.2}
Suppose that  $\mathcal G_{\#}(f,\mu)=\{\mu^*\}$ for any  $\mu\in \MM^+_b(D)$.
Then $\mathcal G_{\#}(f,\mu)=\{\mu^*\}$ for any $\mu\in\MM_b(D)$.
\end{proposition}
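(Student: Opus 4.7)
The plan is to show that every $\nu\in\mathcal G_{\#}(f,\mu)$ equals $\mu^{*}$; once that is done, Proposition~\ref{prop2.2}(i) together with the Rellich--Kondrachov theorem makes $(u_n)$ relatively compact in $L^1(D)$, and uniqueness of the solution with source $\mu^{*}$ upgrades subsequential convergence to convergence of the full sequence. Writing $\mu=\mu^+-\mu^-$, Lemma~\ref{lm2.1} already supplies $\nu\le\mu^{*}$, so everything reduces to establishing the reverse inequality $\nu\ge\mu^{*}$.

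I would first bring in the \emph{companion} sequence $(u_n^+)$ solving \eqref{eq2.1} with $\mu$ replaced by $\mu^+\in\MM_b^+(D)$. Since $\mu^+$ is positive, the standing hypothesis combined with uniqueness of the subsequential limit forces $u_n^+\to u^+$ in $L^1(D)$ along the full sequence, where $u^+$ solves $-\Delta u^+=f(\cdot,u^+)+(\mu^+)^{*}$ with homogeneous Dirichlet data. Proposition~\ref{prop2.2}(ii) applied to $\rho_n\ast\mu\le \rho_n\ast\mu^+$ gives $u_n\le u_n^+$, so
\[
\phi_n:=u_n^+-u_n\ge 0,\qquad g_n:=f(\cdot,u_n)-f(\cdot,u_n^+)\ge 0,
\]
and subtracting the two PDEs produces
\[
-\Delta\phi_n=-g_n+\rho_n\ast\mu^-.
\]

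After extracting a subsequence (still indexed by $n$) I may assume $u_n\to u$ and $u_n^+\to u^+$ a.e., hence $\phi_n\to\phi:=u^+-u$ in $L^1(D)$ and $g_n\to g:=f(\cdot,u)-f(\cdot,u^+)\ge 0$ a.e. Proposition~\ref{prop2.2}(i) bounds $(g_n)$ in $L^1(D)$; a further Banach--Alaoglu extraction yields a nonnegative Radon measure $\lambda$ with $g_n\,dx\to\lambda$ vaguely, and Fatou's lemma forces $\sigma:=\lambda-g\,dx\ge 0$. Passing to the distributional limit in the equation for $\phi_n$, and using the narrow convergence $\rho_n\ast\mu^-\to\mu^-$, yields
\[
-\Delta\phi=-g\,dx-\sigma+\mu^-\quad\text{in }\mathcal D'(D).
\]
Independently, subtracting the PDEs for $u^+$ and $u$ gives
\[
-\Delta\phi=-g\,dx+(\mu^+)^{*}-\nu.
\]
Equating the two and invoking the identity $\mu^{*}=(\mu^+)^{*}-\mu^-$ valid under (B)---a special case of $\Pi_f(\mu)=(\mu^+)^{*,f}-(\mu^-)^{*,\tilde f}$, since $\tilde f(x,y)=0$ for $y\ge 0$ makes $G_D\mu^-$ itself a solution to the $\tilde f$-equation with source $\mu^-$, so that $(\mu^-)^{*,\tilde f}=\mu^-$---gives $\nu=\mu^{*}+\sigma\ge\mu^{*}$. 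Combined with Lemma~\ref{lm2.1} this forces $\nu=\mu^{*}$.

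The delicate point is pinpointing the defect measure produced by the possible failure of $f(\cdot,u_n)\to f(\cdot,u)$ in $L^1(D)$: attacking this defect directly for $(u_n)$ seems to require knowing the limit equation in advance, but routing the analysis through the companion $u_n^+$---whose own defect the hypothesis has already fixed---channels all the nonlinear concentration into the single nonnegative function $g_n$, and the resulting sign $\sigma\ge 0$ is precisely what upgrades Lemma~\ref{lm2.1}'s inequality to an equality.
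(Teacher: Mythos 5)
Your argument is correct and reaches the conclusion $\nu=\mu^*$ by the same overall strategy as the paper --- compare the given sequence with the companion sequence driven by $\rho_n\ast\mu^+$, use the hypothesis for positive measures to identify the companion limit, and combine the resulting lower bound with Lemma \ref{lm2.1} --- but it differs in the one step where the paper outsources the work. The paper passes from $u_{n_k}\to u$ and $w_{n_k}\to w$ directly to $0\le(\mu^+)^*-\nu\le\mu^+-\mu$ by citing the comparison theorem for reduced limits, \cite[Theorem~7.1]{MP}, as a black box; you instead reprove the needed special case by hand: the differences $\phi_n=u_n^+-u_n\ge0$ and $g_n=f(\cdot,u_n)-f(\cdot,u_n^+)\ge0$ (sign by Proposition \ref{prop2.2}(ii) and monotonicity of $f$), a vague limit $\lambda$ of $g_n\,dx$ extracted via the $L^1$-bound \eqref{eq2.4}, and Fatou's lemma giving $\sigma=\lambda-g\,dx\ge0$, whence $\nu=(\mu^+)^*-\mu^-+\sigma\ge\mu^*$. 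This buys a self-contained proof at the cost of a slightly longer computation, and it makes visible exactly where the sign of the defect measure is used --- which the citation hides. Two points you share with the paper and should still make explicit: (a) the identity $\mu^*=(\mu^+)^*-\mu^-$ under (B), which you justify through $\Pi_f$ but which is more directly read off from \eqref{eq.propb} together with the fact that under (B) every nonpositive measure lies in $\mathcal A(f)$; and (b) the upgrade from the hypothesis $\mathcal G_{\#}(f,\mu^+)=\{(\mu^+)^*\}$ to full-sequence convergence $u_n^+\to u^+$, which requires knowing that \emph{every} $L^1$-subsequential limit of $(u_n^+)$ solves \eqref{eq2.2} for \emph{some} $\nu$; under (B) this follows from Fatou's lemma (since $f\le0$ and $f(\cdot,u_n^+)$ is bounded in $L^1$, the limit has $f(\cdot,u^+)\in L^1$) plus the standard identification of the limit equation, but it is not automatic from the definition of $\mathcal G_{\#}$ alone. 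Neither point is a gap in substance --- the paper's own proof relies on both tacitly --- but your write-up would be airtight with a sentence on each.
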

\begin{proof}
Let $\mu\in \MM_b(D)$ and $\nu\in \mathcal G_{\#}(f,\mu)$. By the definition
of $\mathcal G_{\#}(f,\mu)$ there exists a subsequence $(n_k)$ such that $u_{n_k}\to u$ in $L^1(D)$,
where $u$ solves \eqref{eq2.2} and $u_{n_k}$ solves \eqref{eq2.1}.
Let $w_{n_k}$ be the unique solution to
\[
-\Delta u=f(\cdot,w_{n_k})+\rho_{n_k}\ast\mu^+ \quad\text{in } D,\qquad w_{n_k}=0\quad\text{on } \partial D.
\]
By the assumption that we made, $w_{n_k}\to w$ in $L^1(D)$, where $w$ solves
\[
-\Delta w=f(\cdot,w)+\mu^+ \quad\text{in } D,\qquad w=0\quad\text{on } \partial D.
\]
Consequently, by \cite[Theorem 7.1]{MP},
$
0\le (\mu^+)^*-\nu\le \mu^+-\mu.
$
Hence
\[
\mu^*=(\mu^+)^*-\mu^-\le\nu.
\]
This when combined with Lemma \ref{lm2.1} gives $\mu^*=\nu$.
\end{proof}

For any $\mu\in\MM_b(D)$ and $x\in D$ we let
\[
G_D\mu(x):= \int_D G_D(x,y)\,\mu(dy),
\]
whenever $\int_D G_D(x,y)\,|\mu|(dy)<\infty$ and zero otherwise.
Let us consider the following set of {\em admissible measures}:
\[
\mathcal A(f):= \{\mu\in\MM_b(D): f(\cdot,G_D\mu)\in L^1(D)\}.
\]

\begin{proposition}
\label{prop2.4}
Assume (UI). If $\mu\in\mathcal A(f)$, and $G_D\mu\ge 0$,  then  $\mathcal G_{\#}(f,\mu)=\{\mu\}$.
\end{proposition}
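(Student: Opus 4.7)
Fix $\nu\in\GG_\#(f,\mu)$ with associated subsequence $u_{n_k}\to u$ in $L^1(D)$ (and a.e.\ after a further extraction). The plan is to show $\nu=\mu$; combined with the compactness of $(u_n)$ supplied by Proposition \ref{prop2.2}(i) and Rellich--Kondrachov, this will force $\GG_\#(f,\mu)=\{\mu\}$. Since $\mu\ge 0$ and (B) gives $f(\cdot,0)=0$, Proposition \ref{prop2.2}(ii) applied against the trivial subsolution $0$ yields $u_n\ge 0$; hence $f(\cdot,u_n)\le 0$, and the Green representation gives the pointwise sub-solution bound $u_n\le G_D(\rho_n\ast\mu)$. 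Everything reduces to proving $f(\cdot,u_{n_k})\to f(\cdot,u)$ in $L^1(D)$: together with $\rho_{n_k}\ast\eta\to\eta$ uniformly on $\overline D$ for $\eta\in\mathcal C$, this lets one pass to the limit in the weak formulation and obtain $-\int u\Delta\eta=\int f(\cdot,u)\eta+\int\eta\,d\mu$, which, compared with the equation defining $\nu$, forces $\nu=\mu$.

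The candidate dominating function for Vitali is $|f(\cdot,G_D\mu)|\in L^1(D)$ (by $\mu\in\AAA(f)$). Writing
\[
G_D(\rho_n\ast\mu)(x)=\int\Bigl[\int_D G_D(x,z)\rho_n(z-y)\,dz\Bigr]\mu(dy)
\]
and applying the symmetry of $G_D$ together with Proposition \ref{prop2.3} to the superharmonic function $z\mapsto G_D(x,z)$ yields the pointwise inequality $G_D(\rho_n\ast\mu_0)\le G_D\mu_0$ for any positive measure $\mu_0$ supported in $D_{1/n}:=\{x\in D:\mathrm{dist}(x,\partial D)>1/n\}$; the obstruction is that $\mu$ may put mass arbitrarily close to $\partial D$. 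To absorb this, fix $\varepsilon>0$ and, using that $\mu$ does not charge $\partial D$, choose $\delta>0$ with $\mu(D\setminus D_\delta)<\varepsilon$. Split $\mu=\mu_1+\mu_2$ with $\mu_1:=\mu|_{D_\delta}$ and $\|\mu_2\|_\upsilon<\varepsilon$, and let $w_n$ solve the PDE with data $\rho_n\ast\mu_1$. Proposition \ref{prop2.2}(ii) gives $0\le w_n\le u_n$, and for $n\ge 1/\delta$ the previous display applied to $\mu_1$ yields $|f(\cdot,w_n)|\le|f(\cdot,G_D\mu)|$ pointwise.

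Setting $g_n:=|f(\cdot,u_n)|-|f(\cdot,w_n)|\ge 0$, the difference $z_n:=u_n-w_n\ge 0$ satisfies
\[
-\Delta z_n+g_n=\rho_n\ast\mu_2 \text{ in } D,\qquad z_n=0 \text{ on } \partial D;
\]
integrating this identity over $D$ via the Gauss--Green formula (using that $z_n\ge 0$ with vanishing boundary trace forces $\partial_\nu z_n\le 0$) yields $\int_D g_n\,dx\le\|\mu_2\|_\upsilon<\varepsilon$. Hence, for every measurable $A\subset D$ and every $n\ge 1/\delta$,
\[
\int_A|f(\cdot,u_n)|\,dx=\int_A|f(\cdot,w_n)|\,dx+\int_A g_n\,dx\le\int_A|f(\cdot,G_D\mu)|\,dx+\varepsilon,
\]
and absolute continuity of $|f(\cdot,G_D\mu)|\,dx$ (with the finitely many $n<1/\delta$ treated individually) delivers equi-integrability of $\{f(\cdot,u_n)\}$; Vitali's theorem then produces the required $L^1$ convergence. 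The main obstacle is the bound $\int_D g_n\,dx\le\|\mu_2\|_\upsilon$: since $z_n$ belongs only to $W^{1,q}_0(D)$ for $q<d/(d-1)$ and its source is only in $L^1(D)$, the identity $\int_D(-\Delta z_n)\,dx=-\int_{\partial D}\partial_\nu z_n\,dS$ together with the correct sign of the normal derivative must be justified either by a smooth approximation of the $L^1$ sources (combined with Stampacchia-stability from Proposition \ref{prop2.2}(i)) or by invoking the $L^1$-trace theory for elliptic Dirichlet problems with measure data.
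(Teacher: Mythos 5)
Your proof is correct, and it rests on the same two pillars as the paper's: the mean-value inequality of Proposition \ref{prop2.3} applied to the superharmonic function $z\mapsto G_D(x,z)$, giving $G_D(\rho_n\ast\mu_0)\le G_D\mu_0$ for mass supported at distance at least $1/n$ from $\partial D$, and the resulting domination of the absorption term by $|f(\cdot,G_D\mu)|\in L^1(D)$. Where you diverge is in how the mass near $\partial D$ is absorbed. The paper first treats compactly supported $\mu$ by dominated convergence and then approximates a general $\mu$ by the restrictions $\mu_{\lfloor F_k}$, controlling $\|u_n-u_n^k\|_{L^1(D)}$ via the $L^1$-stability of solutions; you keep a single sequence and prove equi-integrability of $\{f(\cdot,u_n)\}$ directly, controlling $\int_D g_n\le\|\mu_2\|_\upsilon$ and concluding by Vitali. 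The two arguments are essentially dual --- each hinges on the same stability estimate for two solutions with ordered data, applied once at the level of the solutions and once at the level of the nonlinearities --- but your version is marginally more self-contained, since you never need the auxiliary solutions $w_n$ to converge. The one step to tighten is the bound $\int_D g_n\le\|\mu_2\|_\upsilon$: the Gauss--Green identity with a signed normal derivative is not directly available for a $W^{1,q}_0(D)$ function with $L^1$ source, as you acknowledge. The clean route is to note that, by monotonicity of $f$ and the ordering $w_n\le u_n$, one has $g_n=|f(\cdot,u_n)-f(\cdot,w_n)|$, and then invoke the standard Kato/Brezis--Strauss absorption estimate $\|f(\cdot,u_n)-f(\cdot,w_n)\|_{L^1(D)}\le\|\rho_n\ast\mu_2\|_{L^1(D)}\le\|\mu_2\|_\upsilon$; this is the same stability estimate the paper itself uses (under the umbrella of \eqref{eq2.4}) in its Step 2, so citing it directly is both rigorous and consistent with the paper's conventions.
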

\begin{proof}
Let $(n_k)$ be a subsequence such that $u_{n_k}\to u$
for some $u\in L^1(D)$, where $u_{n_k}$ solves \eqref{eq2.1}.
Set $w:=G_D\mu\ge 0$.
By \eqref{eq2.g1} and the assumptions made on $f$, we have 
\[
u_{n_k}(x)=[G_Df(\cdot,u_{n_k})](x)+[G_D(\rho_{n_k}\ast\mu)](x)\le 
[G_D(\rho_{n_k}\ast\mu)](x)
\quad \text{in } D,\,\,m\text{-a.e.}
\]
(note that $f\le 0$ by (B)). Observe  that  $v_k:=G_D(\rho_{n_k}\ast\mu)$
solves 
\[
-\Delta v_k=\rho_{n_k}\ast\mu \quad\text{in } D,\qquad v_{k}=0\quad\text{on } \partial D,
\]
and $\rho_{n_k}\ast w $ solves 
\[
-\Delta (\rho_{n_k}\ast w) =\rho_{n_k}\ast\mu \quad\text{in } D,\qquad \rho_{n_k}\ast w \ge 0\quad\text{on } \partial D.
\]
Thus, $v_k\le \rho_{n_k}\ast w$, which combined with the previous inequality yields 
\[
u_{n_k}(x)\le
[\rho_{n_k}\ast w](x)\quad \text{in } D,\,\,m\text{-a.e.}
\]
Since $\mu$ was assumed to be in $\mathcal A(f)$, we have $w\in L^1(D)$.
By (UI),  $(f(\cdot,\rho_{n_k}\ast w))$ is uniformly integrable.
Consequently, by the Vitali   convergence theorem, $f(\cdot,u_{n_k})\to f(\cdot,u)$
in $L^1(D)$ (here we also used condition (B)). Therefore, letting $k\rightarrow\infty$ in   the equation
\[
-\int_Du_{n_k}\Delta \eta=\int_D f(\cdot,u_{n_k})\eta+\int_D(\rho_{n_k}*\eta)\,d\mu,\quad\eta\in\mathcal C,
\]
shows that $u$ solves \eqref{eq1.1}. Since the subsequence $(n_k)$ was chosen arbitrarly,
we conclude that $\mathcal G_{\#}(f,\mu)=\{\mu\}$.
\end{proof}

Consider the following condition (weaker than (AC))
\begin{enumerate}
\item[(A)] there exists a convex function $\varphi:\mathbb R^+\to\mathbb R^+$, and  $M,c_1,c_2>0$ such that 
\[
c_1\varphi(x)\le g(x)\le c_2\varphi(x),\quad x\ge M.
\]
\end{enumerate}

\begin{example}
\label{ex1.11}
In the present example we show that (A) implies (UI).
Assume that (A) holds and $f(w)\in L^1(D)$ for some non-negative $w\in L^1(D)$.
By the  de la Vall\'ee--Poussin lemma (see e.g.  \cite{Meyer})
 there exists  a convex increasing function
$\psi:\BR^+\rightarrow\BR^+$ such that  $\psi(0)=0$,  $\lim_{x\rightarrow\infty}\psi(x)/x=\infty$,   
$\psi(x+y)\le a\psi(x)+a\psi(y),\, x,y\in\mathbb R^+$ for some $a\ge 1$,  and 
\begin{equation}
\label{eq.ui1}
\int_D\psi(g(w))<\infty.
\end{equation}
By (A)
\[
c_1\varphi(u)-m\le g(u)\le c_2\varphi(u)+m,\quad u\ge 0,
\]
where $m:= \sup_{|u|\le M}\varphi(u)+\sup_{|u|\le M}g(u)$. Thus,
\[
\begin{split}
\psi\circ g(\rho_{n}\ast w)&\le c(a,c_2) \Big(\psi\circ\varphi(\rho_{n}\ast w)+\psi(m)\Big)
\\&\le c(a,c_2) \Big(\rho_{n}\ast [\psi\circ\varphi(w)]+\psi(m)\Big)\le c(a,c_1,c_2) \Big(\rho_{n}\ast [\psi\circ g(w)]+\psi(m)\Big).
\end{split}
\]
This combined  with \eqref{eq.ui1} yields
\[
\sup_{k\ge 1} \int_D\psi\circ g(\rho_{n}\ast w)<\infty.
\]
By the  de la Vall\'ee--Poussin lemma again $(f(\rho_{n}\ast w))_{n\ge 1}$ is uniformly integrable.
\end{example}

A  function $\phi:\mathbb R^+\to\mathbb R^+$ is said to satisfy $\Delta_2$-condition 
if there exists $C\ge 0$ such that $\phi(2x)\le C\phi(x),\, x\ge 0$.

\begin{example}
\label{ex1.12}
Observe that if $g$ satisfies $\Delta_2$-condition and there exists a strictly increasing convex 
function $\phi$, with $\phi(0)=0$, such that
$h(x):= g(x)/\phi(x)$ is  increasing, then (A) holds.
Indeed, assume first additionally that $\phi(x)=x,\, x\ge 0$. 
Observe that  the function
\[
\varphi_1(x):=\int_0^xg(y)y^{-1}\,dy,\quad x\ge 0
\]
is increasing and convex. 
Furthermore,
\[
g(x)\ge \varphi_1(x)\ge \int_{x/2}^xg(y)y^{-1}\,dy\ge g(x/2)\ge C^{-1}g(x).
\]
Now, applying the above inequality  to the function $g_\phi$ in place of $g$, where
\[
g_\phi(x):= g(\phi^{-1}(x)),\quad x\ge 0,
\]
we get 
\[
C^{-1}\varphi_1(\phi(x))\le g(x)\le \varphi_1(\phi(x)),\quad x\ge 0.
\]
Thus, letting $\varphi(x):= \varphi_1(\psi(x))$ we get (A).
\end{example}

\section{Main results}

\begin{theorem}
\label{th3.1}
Let $\mu\in \MM_b(D)$ and \mbox{\rm(B), (UI)} hold. Let $u_n$ be the unique solution to \eqref{eq1.2}
and $u^*$ be the unique solution to \eqref{eq.gw1}. Then
\[
\lim_{n\rightarrow\infty}\|u_n-u^*\|_{L^1(D)}=0.
\]
\end{theorem}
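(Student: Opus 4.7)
The argument proceeds in three stages. First, by Proposition~\ref{prop3.2} it suffices to prove $\mathcal G_{\#}(f,\mu)=\{\mu^*\}$ for every $\mu\in\MM_b^+(D)$; convergence of the full sequence $(u_n)$ rather than merely along subsequences will follow from the $L^1$-precompactness furnished by the Stampacchia estimate \eqref{eq2.4} together with Rellich--Kondrachov, once the subsequential limit is identified as $\mu^*$.

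Second, I would fix $\mu\in\MM_b^+(D)$ and $\nu\in\mathcal G_\#(f,\mu)$ realized by some subsequence $u_{n_k}\to u$ in $L^1(D)$, with $u$ solving \eqref{eq2.2}. Lemma~\ref{lm2.1} already yields $\nu\le\mu^*$. For the reverse inequality I would sandwich $u_n$ from below by the unique solution $v_n$ of
\[
-\Delta v_n=f(\cdot,v_n)+\rho_n\ast\mu^*\quad\text{in } D,\qquad v_n=0\quad\text{on }\partial D.
\]
Since $\mu^*\le\mu$ implies $\rho_n\ast\mu^*\le\rho_n\ast\mu$, Proposition~\ref{prop2.2}(ii) gives $v_n\le u_n$ a.e. Granted $v_n\to u^*$ in $L^1(D)$, passage to the limit along $(n_k)$ then produces $u^*\le u$; on the other hand $\nu\le\mu^*$ combined with Proposition~\ref{prop2.2}(ii) forces $u\le u^*$, so $u=u^*$ a.e., and identifying the distributional source yields $\nu=-\Delta u-f(\cdot,u)=-\Delta u^*-f(\cdot,u^*)=\mu^*$.

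The crucial remaining step is to show that $v_n\to u^*$ in $L^1(D)$ whenever $\mu^*\in\GG(f)\cap\MM_b^+(D)$. My plan is to invoke the density $\GG(f)=\mathcal A(f)+B_{L^1}(0,r)$ from \cite[Theorem 7.2]{K:arx} to decompose $\mu^*=\lambda_j+g_j$ with $\lambda_j\in\mathcal A(f)$ and $\|g_j\|_{L^1}<1/j$, and to combine it with the Kato-type $L^1$-TV contraction for the semilinear solution map on $\GG(f)$, namely $\|u(\sigma_1)-u(\sigma_2)\|_{L^1(D)}\le C\|\sigma_1-\sigma_2\|_{\upsilon}$ for $\sigma_1,\sigma_2\in\GG(f)$, obtained in the usual way by testing against $\sgn_{\ve}(u(\sigma_1)-u(\sigma_2))$ and exploiting the monotonicity of $f$. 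This will deliver $\|v_n-u(\rho_n\ast\lambda_j)\|_{L^1}\le C/j$ and $\|u^*-u(\lambda_j)\|_{L^1}\le C/j$ uniformly in $n$; a suitable generalization of Proposition~\ref{prop2.4} to the (possibly signed) admissible measure $\lambda_j$ will then provide $u(\rho_n\ast\lambda_j)\to u(\lambda_j)$ as $n\to\infty$, and a triangle inequality will close the argument. The hard part will be precisely this generalization: the pointwise bound $u_n\le G_D\mu$ in Step 1 of Proposition~\ref{prop2.4} rested on $\mu\ge 0$ and must be replaced by $u_n\le G_D\lambda_j^+$ via Proposition~\ref{prop2.3} applied to $\lambda_j^+$; admissibility of $\lambda_j^+$ is not automatic, but should be recoverable from the constraint $\mu^*=\lambda_j+g_j\ge 0$, which forces $\lambda_j^+\le\mu^*+g_j^-$ and, through the monotonicity of $f$ and an $L^1$-perturbation argument, controls $f(\cdot,G_D\lambda_j^+)$ in $L^1$ up to an $O(\|g_j\|_{L^1})$ error.
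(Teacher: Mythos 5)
Your proposal follows the same architecture as the paper's proof: reduction to $\mu\in\MM_b^+(D)$ via Proposition~\ref{prop3.2}, the sandwich $v_n\le u_n$ with $v_n$ driven by $\rho_n\ast\mu^*$, and the key convergence statement for positive good measures obtained from the decomposition $\GG(f)=\mathcal A(f)+B_{L^1}(0,r)$, the $L^1$ stability estimate \eqref{eq2.4}, and Proposition~\ref{prop2.4}. Two points of comparison. First, where you identify $\nu=\mu^*$ by the a.e.\ sandwich $u^*\le u\le u^*$ (the upper bound coming from $\nu\le\mu^*$ of Lemma~\ref{lm2.1} together with Proposition~\ref{prop2.2}(ii)), the paper instead invokes the inverse maximum principle of Marcus--Ponce \cite[Theorem 7.1]{MP} to extract $\mu^*\le\nu$ from $w_{n_k}\le u_{n_k}$. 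Your variant is more elementary and equally valid: once $u=u^*$ a.e., the distributional formulation forces $\nu=\mu^*$.

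Second, the step you single out as hard --- running the argument of Proposition~\ref{prop2.4} for the possibly signed admissible measure $\lambda_j=\mu^*-g_j$ --- is precisely the step the paper passes over silently: it applies Proposition~\ref{prop2.4} to $\mu-g_k$ without comment on positivity, so you have correctly located where the real work lies. Your proposed repair starts correctly (dominate $u_n$ by $G_D\lambda_j^+$ via Proposition~\ref{prop2.3}), but the claim that admissibility of $\lambda_j^+$ is recoverable from $\lambda_j^+\le\mu^*+g_j^-$ is not conclusive as written. The monotonicity of $f$ does give the right-way domination $f^-(\cdot,G_D\lambda_j^+)\le f^-(\cdot,G_D\mu^*+G_Dg_j^-)$, but this is only useful if the majorant is integrable, i.e.\ if $\mu^*+g_j^-$ is itself admissible; and membership of $\mu^*$ in $\GG(f)$ does not imply $f(\cdot,G_D\mu^*)\in L^1(D)$, since $\mathcal A(f)\subsetneq\GG(f)$ in general. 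So this sub-step still requires an argument (or a version of \cite[Corollary 7.3]{K:arx} producing a \emph{positive} admissible $\lambda_j\le\mu^*$). Everything else in your outline matches the paper.
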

\begin{proof}
By Proposition \ref{prop3.2},  without loss of generality we may assume that $\mu\in\MM_b^+(D)$.

{\em Step 1}. Assume additionally that $\mu\in\GG(f)$.
 By \cite[Corollary 7.3]{K:arx}, for any $k\ge 1$ there exists a function
$g_k\in L^1(D)$ such that $\|g_k\|_{L^1(D)}\le 1/k$ and
\[
\mu-g_k\in \mathcal A(f),\quad G_D(\mu-g_k)\ge 0.
\]
Let $u$ be the solution to \eqref{eq1.1}, $u^k_n$ be the unique solution to
\[
-\Delta u^k_n=f(\cdot,u^k_n)+\rho_n\ast(\mu-g_k) \quad\text{in } D,\qquad u^k_n=0\quad\text{on } \partial D,
\]
and $u^k$ be the unique solution to
\[
-\Delta u^k=f(\cdot,u^k)+\mu-g_k \quad\text{in } D,\qquad u^k=0\quad\text{on } \partial D.
\]
By \eqref{eq2.4},
\[
\|u-u_n\|_{L^1(D)}\le  \|u-u^k\|_{L^1(D)}+\|u^k-u^k_n\|_{L^1(D)}+\|u^k_n-u_n\|_{L^1(D)}\le \frac{C}{k}+\|u^k-u^k_n\|_{L^1(D)}.
\]
For fixed $k\ge 1$, by Proposition \ref{prop2.4}, $\|u^k-u^k_n\|_{L^1(D)}\to 0$ as $n\to\infty$.
One easily concludes now that $\|u-u_n\|_{L^1(D)}\to 0$, which shows that $\mathcal G_{\#}(f,\mu)=\{\mu\}$.

{\em Step 2}. The general case. Let $\mu\in \MM_b^+(D)$. Since $\mu^*\le\mu$, we have
$\rho_n\ast\mu^*\le \rho_n\ast \mu,\, n\ge 1$. Let $w_n$ be the solution to
\[
-\Delta w_n=f(\cdot,w_n)+\rho_n\ast\mu^* \quad\text{in } D,\qquad w_n=0\quad\text{on } \partial D.
\]
Let $\nu\in \mathcal G_{\#}(f,\mu)$ and $(n_k)$ be a subsequence such that $u_{n_k}\to v$ in $L^1(D)$
and $v$ solves \eqref{eq2.2}. By {\em Step 1}, $w_{n_k}\to u^*$. Hence, by \cite[Theorem 7.1]{MP},
$0\le \nu-\mu^*$, so $\nu=\mu^*$ by Lemma \ref{lm2.1}.
\end{proof}

In \cite[Section 6.4]{K:arx} it is shown that there exists a continuous metric projection
onto $\GG(f)$
\[
\Pi_f:\MM_b(D)\to \GG(f),
\]
i.e.
\[
\inf_{\nu\in\GG(f)}\|\mu-\nu\|_{\upsilon}=\|\mu-\Pi_f(\mu)\|_{\upsilon}
\]
such that $\Pi_f(\mu+\nu)=\Pi_f(\mu)+\Pi_f(\nu)$ for any $\mu,\nu\in\MM_b(D)$, with $\mu\bot\nu$.
Moreover, there is at most  one continuous metric projection onto $\GG(f)$  having this property.
Furthermore, we have shown that $\Pi_f$ admits the following representation
\begin{equation}
\label{eq4.1}
\Pi_f(\mu)= (\mu^+)^{*,f}-(\mu^-)^{*,\tilde f}.
\end{equation}
where $\tilde f(x,y):= -f(x,-y),\, y\in\mathbb R,\, x\in D$.

\begin{lemma}
\label{lm3.1}
Let $\mu\in\MM^+_b(D)$. Then
\begin{enumerate}
\item[\rm(i)] For any functions $f_1,f_2$ satisfying \mbox{\rm(H)},
and such that $|f_1-f_2|\le h$ for some $h\in L^1(D)$, we have $\mu^{*,f_1}=\mu^{*,f_2}$.

\item[\rm(ii)] $\mu^{*,f}=\mu^{*,-f^-}$.
\end{enumerate}
\end{lemma}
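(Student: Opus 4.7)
My plan is to handle the two parts by different methods: (i) is essentially an application of the principle, stated after \eqref{eq1.4}, that $\Pi_f$ depends on $f$ only through $\mathcal A(f)$, while (ii) is proved by a direct subsolution/supersolution construction.

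For (i), the hypothesis $|f_1 - f_2| \le g \in L^1(D)$ forces $\mathcal A(f_1) = \mathcal A(f_2)$ via the pointwise bound $|f_1(\cdot, G_D \nu) - f_2(\cdot, G_D \nu)| \le g$ for every $\nu \in \MM_b(D)$. Applying the implication $\mathcal A(f_1) = \mathcal A(f_2) \Rightarrow \Pi_{f_1} = \Pi_{f_2}$ recalled after \eqref{eq1.4} and specializing the representation \eqref{eq4.1} to $\mu \in \MM_b^+(D)$, where $\mu^- = 0$ and $0^{*,\tilde f_i} = 0$ (the latter because $0 \in \GG(\tilde f_i)$ and the reduction is simultaneously $\le 0$ and $\ge 0$ by $(\mu^+)^* = (\mu^*)^+$), gives $\Pi_{f_i}(\mu) = \mu^{*,f_i}$, hence $\mu^{*,f_1} = \mu^{*,f_2}$.

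For (ii) this route is blocked: $|f - (-f^-)| = f^+$ is only nonincreasing in $y$ and may blow up as $y \to -\infty$, so no single $L^1(D)$ bound is available and (i) cannot be applied directly. Instead I will establish
\[
\GG(f) \cap \MM_b^+(D) = \GG(-f^-) \cap \MM_b^+(D).
\]
This suffices, for the following reason: since $\mu \ge 0$ implies $\mu^{*,f} \ge 0$ by \eqref{eq.propb}, one has $\mu^{*,f} = \max\{\nu \in \GG(f) \cap \MM_b^+(D): \nu \le \mu\}$, and symmetrically for $-f^-$, so equality of the two positive cones of good measures forces equality of the two reductions. For the inclusion $\GG(-f^-) \cap \MM_b^+(D) \subseteq \GG(f)$, given $\nu \ge 0$ with a $(-f^-)$-solution $v$, I take as supersolution for the $f$-equation the unique very weak solution $w$ of $-\Delta w = f^+(\cdot, 0) + \nu$ with zero boundary data: positivity of $w$ yields $f(\cdot, w) \le f(\cdot, 0) \le f^+(\cdot, 0)$, $v$ is a subsolution for $f$ because $f \ge -f^-$, and the comparison $v \le w$ follows from $-\Delta(w - v) = f^+(\cdot, 0) + f^-(\cdot, v) \ge 0$ combined with the maximum principle. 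For the reverse inclusion, given $\nu \ge 0$ in $\GG(f)$ with solution $u$, the function $u$ itself is a supersolution for the $(-f^-)$-equation (again by $f \ge -f^-$), and a subsolution $\underline v$ is provided by the very weak solution of the linear problem $-\Delta \underline v = -f^-(\cdot, u) + \nu$; here $\underline v \le u$ follows from $-\Delta(u - \underline v) = f^+(\cdot, u) \ge 0$, and the required subsolution inequality $-f^-(\cdot,u) \le -f^-(\cdot,\underline v)$ from the monotonicity of $f^-$ in the second variable together with $\underline v \le u$.

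The main technical ingredient I expect to need is the subsolution/supersolution existence theorem for $-\Delta u = g(\cdot, u) + \nu$ with $g$ satisfying only (H) and data in $L^1(D) + \MM_b(D)$: the naive monotone iteration fails without a Lipschitz-type hypothesis on $g$, but the required existence result is available in the Brezis--Marcus--Ponce framework (cf.\ \cite{BMP, Ponce}) and I would cite it as a black box. Applied in each of the two directions above, it produces a solution of the target equation between the constructed subsolution and supersolution, completing the proof of both inclusions and hence of (ii).
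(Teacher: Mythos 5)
Part (i) of your argument is essentially the paper's: both proofs rest on $\mathcal A(f_1)=\mathcal A(f_2)$ combined with \cite[Corollary 7.3]{K:arx} to get $\GG(f_1)=\GG(f_2)$, and then a projection/maximality characterization of the reduced measure; your detour through $\Pi_f$ and \eqref{eq4.1} is harmless but unnecessary, since for $\mu\ge 0$ one can use directly that $\mu^{*,f_i}=\max\GG_{\prec\mu}(f_i)$.

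For (ii) you take a genuinely different route (equality of the positive cones $\GG(f)\cap\MM_b^+(D)=\GG(-f^-)\cap\MM_b^+(D)$, deduced by sub/supersolutions, then the $\max$ characterization), and the reduction from the set equality to $\mu^{*,f}=\mu^{*,-f^-}$ is sound, as is the inclusion $\GG(f)\cap\MM_b^+(D)\subseteq\GG(-f^-)$. The gap is in the inclusion $\GG(-f^-)\cap\MM_b^+(D)\subseteq\GG(f)$. Your proposed supersolution $w$, solving $-\Delta w=f^+(\cdot,0)+\nu$, need not satisfy $f(\cdot,w)\in L^1(D)$, and this integrability is part of the definition of a supersolution in this paper and a hypothesis of every version of the sub/supersolution existence theorem you invoke as a black box. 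Concretely, take $d=2$, $f(x,y)=1-e^{y}$ (so $f^+(\cdot,0)=0$) and $\nu=4\pi\delta_{x_0}$: then $w=G_D\nu$ and $e^{w}\sim|x-x_0|^{-2}\notin L^1(D)$, so $f(\cdot,w)\notin L^1(D)$ even though $\nu\in\GG(-f^-)$. The same defect affects your subsolution $v$: solving the $(-f^-)$-equation only gives $f^-(\cdot,v)\in L^1(D)$, and you have no control on $f^+(\cdot,v)$ because without normalization the sign of $v$ is not determined (if $f(\cdot,0)<0$ on a set of positive measure, the solution of the $(-f^-)$-equation with $\nu=0$ is negative there). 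This is not a removable technicality: the failure of $f(\cdot,G_D\nu)$ to be integrable is exactly the obstruction that the whole theory of reduced measures is built around.

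The missing idea is the paper's two-step device: first use part (i) to reduce to $f(\cdot,0)\equiv 0$ (writing $f_0=f(\cdot,0)\in L^1(D)$ and noting $|f-f_0-(-(f-f_0)^-)|$ and $|(f-f_0)^- - f^-|$ are both dominated by $|f_0|$); then observe that for $\mu\ge 0$ and $f(\cdot,0)\equiv 0$ the relevant solutions are non-negative, so that $f(\cdot,u)=-f^-(\cdot,u)$ holds identically along them and the two equations have literally the same solutions --- no sub/supersolution theorem is needed. (The paper implements this at the level of the approximating problems with truncated nonlinearity $f\vee(-n)$ and identifies both limits via \cite[Theorem 4.1]{BMP}.) If you insert the normalization $f(\cdot,0)\equiv 0$ into your scheme, your problematic inclusion collapses to the trivial observation that the non-negative solution of the $(-f^-)$-equation already solves the $f$-equation, and your proof can be repaired.
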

\begin{proof}
Observe that $\mathcal A(f_1)=\mathcal A(f_2)$.
Hence, by \cite[Corollary 7.3]{K:arx}, $\mathcal G(f_1)=\mathcal G(f_2)$.
Since $\mu^{*,f_i}$ is the unique (see \cite[Corollary 4.6]{BMP}) metric projection onto $\mathcal G(f_i)$, $i=1,2$,
we get (i). As for (ii), we claim that for its proof  we may  assume without loss of generality that $f(\cdot,0)\equiv 0$.
Indeed, suppose  that (ii) holds  with $f$ replaced by   $\phi$ satisfying (H) and  such that $\phi(\cdot,0)\equiv 0$.
Set $f_0(x):=f(x,0),\, x\in D$.
Then by (i),
\[
\mu^{*,f}=\mu^{*,f-f_0}=\mu^{*,-(f-f_0)^-}=\mu^{*,-f^-}.
\]
This establishes the claim. Assume additionally that  $f(\cdot,0)\equiv 0$.
Let $u_n$ be the unique solution to
\[
-\Delta u_n=f(\cdot,u_n)\vee(-n)+\mu \quad\text{in } D,\qquad u_n=0\quad\text{on } \partial D,
\]
By \cite[Theorem 4.1]{BMP}, $u_n\to v$ in $L^1(D)$ and
\[
-\Delta v=f(\cdot,v)+\mu^{*,f} \quad\text{in } D,\qquad v=0\quad\text{on } \partial D.
\]
On the other hand, by  Proposition \ref{prop3.2} and the fact that $\mu$ is positive,
$u_n$ solves the problem
\[
-\Delta u_n=-f^-(\cdot,u_n)\vee(-n)+\mu \quad\text{in } D,\qquad u_n=0\quad\text{on } \partial D.
\]
Hence,  by \cite[Theorem 4.1]{BMP} again, $u_n\to w$, where
\[
-\Delta w=f(\cdot,w)+\mu^{*,-f^-} \quad\text{in } D,\qquad w=0\quad\text{on } \partial D.
\]
Clearly $v=w$, which implies that $\mu^{*,f}=\mu^{*,-f^-}$. This completes the proof of (ii).
\end{proof}

Combining Lemma \ref{lm3.1}(ii) with \eqref{eq4.1}, we obtain
\begin{equation}
\label{eq4.2}
\Pi_f(\mu)=(\mu^+)^{*,-f^-}-(\mu^-)^{*,\widetilde{f^+}}.
\end{equation}

Before  proceeding  to the proof of the main theorem, let us make
the following simple observations.
Let $\mu\in \GG(f)$. Then there exists a unique solution $u$ to \eqref{eq1.1}.
We therefore have
\[
-\Delta u=f^+(\cdot,u)+(\mu-f^-(\cdot,u)),\quad\text{in } D,\qquad u=0\quad\text{on } \partial D,
\]
and
\[
-\Delta u=-f^-(\cdot,u)+(\mu+f^+(\cdot,u)),\quad\text{in } D,\qquad u=0\quad\text{on } \partial D.
\]
Thus $\mu-f^-(\cdot,u)\in \GG(f^+)$, $\mu+f^+(\cdot,u)\in \GG(-f^-)$, so by \cite[Corollary 4.7]{BMP},
 $\mu\in \GG(f^+)$, $\mu\in \GG(-f^-)$. We may also write
 \[
-\Delta (-u)=\tilde f(\cdot,-u)-\mu,\quad\text{in } D,\qquad -u=0\quad\text{on } \partial D,
\]
which shows that $\mu\in\GG(f)$ if and only if $-\mu\in\GG(\tilde f)$.

\begin{theorem}
\label{th3.2}
Let $\mu\in \MM_b(D)$. Let $u_n$ be the unique solution to \eqref{eq1.2}
and $u^\pi$ be the unique solution to
\[
-\Delta u=f(\cdot,u)+\Pi_f(\mu) \quad\text{in } D,\qquad u=0\quad\text{on } \partial D.
\]
Then
\[
\lim_{n\rightarrow\infty}\|u_n-u^\pi\|_{L^1(D)}=0.
\]
\end{theorem}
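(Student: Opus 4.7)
The plan is to reduce Theorem \ref{th3.2} to Theorem \ref{th3.1} by combining Lemma \ref{lm3.1}(ii), the representation (\ref{eq4.2}) of $\Pi_f$, and the symmetry $(f,\mu)\leftrightarrow(\tilde f,-\mu)$ recorded just before the statement. The proof will proceed in three steps: the positive-measure case, the negative-measure case (by sign flip), and then the general signed case handled in the spirit of Proposition \ref{prop3.2}.

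\emph{Step A (case $\mu\in\MM_b^+(D)$).} Using Lemma \ref{lm3.1}(i), I would first arrange that $f(\cdot,0)\equiv 0$ (since $f(\cdot,0)\in L^1(D)$ and $L^1(D)\subset\GG(f)$, adding it to $\mu$ only perturbs the limit problem by a good measure). Once $f(\cdot,0)=0$, the zero function is a subsolution to the equation defining $u_n$, so Proposition \ref{prop2.2}(ii), applied with $\mu_1=0$ and $u_1\equiv 0$, forces $u_n\ge 0$ a.e. Monotonicity of $f$ then gives $f(\cdot,u_n)\le 0$, i.e.\ $f(\cdot,u_n)=-f^-(\cdot,u_n)$, so $u_n$ is the unique solution to the same Dirichlet problem with $f$ replaced by $-f^-$. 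Because $f(\cdot,y)\ge f(\cdot,0)=0$ for $y\le 0$, the function $-f^-$ satisfies (B), and Theorem \ref{th3.1} yields $u_n\to u^*$ in $L^1(D)$, where $u^*$ solves the problem for $(-f^-,\mu^{*,-f^-})$. The same maximum-principle argument gives $u^*\ge 0$, hence $-f^-(\cdot,u^*)=f(\cdot,u^*)$; by Lemma \ref{lm3.1}(ii) and (\ref{eq4.1}) (which for $\mu\ge 0$ reduces to $\Pi_f(\mu)=\mu^{*,f}$), the measure $\mu^{*,-f^-}$ equals $\Pi_f(\mu)$. Thus $u^*=u^\pi$.

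\emph{Step B (case $\mu\in\MM_b^-(D)$).} The substitution $v_n:=-u_n$ converts the problem into the one with $(\tilde f,-\mu)$, and $-\mu\in\MM_b^+(D)$. Step A applied to $(\tilde f,-\mu)$, combined with $\Pi_{\tilde f}(-\mu)=-\Pi_f(\mu)$ (immediate from (\ref{eq4.1})), yields the claim.

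\emph{Step C (general $\mu\in\MM_b(D)$).} I would mimic Proposition \ref{prop3.2}. Pick a subsequence $u_{n_k}\to u$ in $L^1(D)$ solving $-\Delta u=f(\cdot,u)+\nu$ for some $\nu$ (by Proposition \ref{prop2.2}(i) and Rellich--Kondrachov, such limits exist). Let $v_n$ and $w_n$ be the unique solutions to the problems with $\rho_n\ast\mu^+$ and $-\rho_n\ast\mu^-$ in place of $\rho_n\ast\mu$. By Steps A and B, $v_n\to v^\pi$ with reduced measure $(\mu^+)^{*,f}$ and $w_n\to w^\pi$ with reduced measure $-(\mu^-)^{*,\tilde f}$. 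Since $-\rho_n\ast\mu^-\le\rho_n\ast\mu\le\rho_n\ast\mu^+$, two applications of the monotonicity principle in \cite[Theorem 7.1]{MP} sandwich $\nu$ between $(\mu^+)^{*,f}-\mu^-$ and $\mu^+-(\mu^-)^{*,\tilde f}$. To pin down $\nu=\Pi_f(\mu)=(\mu^+)^{*,f}-(\mu^-)^{*,\tilde f}$ it remains to exploit the mutual singularity of $\mu^+$ and $\mu^-$ together with the additivity property of the reduction operator on disjoint parts, so that the reductions on $\mu^+$ and on $-\mu^-$ happen independently.

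The main obstacle is Step C: the straightforward monotone comparison with $v_n$ and $w_n$ only yields an interval containing $\Pi_f(\mu)$, not the desired equality. Closing the sandwich requires either a refined application of \cite[Theorem 7.1]{MP} that tracks reductions component-wise on the mutually singular supports of $\mu^+$ and $\mu^-$, or a decomposition of the mollified approximations that preserves this disjointness in the limit. Steps A and B, by contrast, reduce cleanly to Theorem \ref{th3.1} once the positivity of $u_n$ is exploited.
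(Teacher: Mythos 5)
Your Steps A and B are sound (modulo a small wrinkle: the reduction to $f(\cdot,0)\equiv 0$ via Lemma \ref{lm3.1}(i) changes the data from $\rho_n\ast\mu$ to $\rho_n\ast\mu+f_0$, which is no longer a mollification of a positive measure, so the positivity of $u_n$ and the applicability of Theorem \ref{th3.1} need an extra comparison argument). But the decisive issue is the one you flagged yourself: Step C as written only produces the sandwich
\[
(\mu_c^+)^{*}\le \nu_c^+\le\mu_c^+,\qquad (\mu_c^-)^{*}\le\nu_c^-\le\mu_c^-
\]
(after passing to concentrated parts and using $(\lambda^*)_c=(\lambda_c)^*$, $(\lambda^*)_d=\lambda_d$), and this does not identify $\nu$. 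Moreover, the mechanism you propose to close it --- mutual singularity of $\mu^+$ and $\mu^-$ plus additivity of the reduction on disjoint parts --- is not the right lever: additivity tells you what $\Pi_f(\mu)$ is, but says nothing about which point of the interval the limit measure $\nu$ occupies.

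The missing idea is that $\nu$ is itself a \emph{good} measure: since the subsequential limit $u$ solves \eqref{eq2.2}, $\nu\in\GG(f)$, hence (by rewriting the equation, as in the comments preceding the theorem) $\nu\in\GG(-f^-)$ and $-\nu\in\GG(\widetilde{f^+})$, and then by \cite[Theorem 4.6$'$]{BMP} the signed concentrated parts satisfy $\nu_c^+\in\GG(-f^-)$ and $\nu_c^-\in\GG(\widetilde{f^+})$. Combined with $\nu_c^+\le\mu_c^+$ and the maximality characterization $\mu^{*}=\max\GG_{\prec\mu}$, this forces $\nu_c^+\le(\mu_c^+)^{*,-f^-}$, turning the one-sided bound into equality, and similarly for $\nu_c^-$; together with $\nu_d=\mu_d$ this yields $\nu=\Pi_f(\mu)$ via \eqref{eq4.2}. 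Note also that the paper obtains its sandwich differently: rather than splitting the measure $\mu=\mu^+-\mu^-$ as you do, it splits the nonlinearity $f=f^+-f^-$ and compares $u_{n_k}$ with the solutions $v_{n_k}$, $w_{n_k}$ of the problems for $(-f^-,\rho_{n_k}\ast\mu)$ and $(f^+,\rho_{n_k}\ast\mu)$ with the \emph{full} measure; this avoids your case split into Steps A--C entirely and feeds directly into Theorem \ref{th3.1} through the identity $\mu^{*,f}=\mu^{*,-f^-}$ of Lemma \ref{lm3.1}(ii). Either decomposition produces a usable sandwich, but without the goodness-plus-maximality step your argument cannot be completed.
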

\begin{proof}
Let $\nu\in\GG_{\#}(f,\mu)$.
By the very definition of the class $\GG_{\#}(f,\mu)$,  there exist  a subsequence $(n_k)$
and functions $u_{n_k}, u$   solving \eqref{eq2.1} and  \eqref{eq2.2}, respectively,
such that  $u_{n_k}\to u$ in $L^1(D)$.
Let $(v_{n_k})$ be the sequence of functions solving
\[
-\Delta u=-f^-(\cdot,v_{n_k})+\rho_{n_k}\ast\mu \quad\text{in } D,\qquad v_{n_k}=0\quad\text{on } \partial D,
\]
and $(w_{n_k})$ be the sequence of functions solving
\[
-\Delta w_{n_k}=f^+(\cdot,w_{n_k})+\rho_{n_k}\ast\mu \quad\text{in } D,\qquad w_{n_k}=0\quad\text{on } \partial D.
\]
Observe that $-w_{n_k}$ solves
\[
-\Delta(-w_{n_k})=\widetilde {f^+}(\cdot,-w_{n_k})+\rho_{n_k}\ast(-\mu) \quad\text{in } D,\qquad -w_{n_k}=0\quad\text{on } \partial D.
\]
By Proposition \ref{prop2.2},
$
v_{n_k}\le u_{n_k}\le w_{n_k}$, $k\ge 1$.
By Theorem \ref{th3.1}, $v_{n_k}\to v$ and $-w_{n_k}\to -w$, where
\[
-\Delta v=-f^-(\cdot,v)+\mu^{*,-f^-} \quad\text{in } D,\qquad v=0\quad\text{on } \partial D,
\]
and
\[
-\Delta (-w)=\widetilde {f^+}(\cdot,-w)+(-\mu)^{*,\widetilde {f^+}} \quad\text{in } D,\qquad -w=0\quad\text{on } \partial D.
\]
By the inverse maximum principle (see \cite[Proposition 7.2]{MP}),
\[
\mu^{*,-f^-}\le \nu\le -(-\mu)^{*,\widetilde{f^+}}.
\]
By \eqref{eq.propb},
\[
(\mu^{*,-f^-})_d=\mu_d,\quad   \big(-(-\mu)^{*,\widetilde{f^+}})\big)_d=\mu_d.
\]
Consequently, $\nu_d=\mu_d$.
Furthermore, by \cite[Lemm 4.1]{BMP} (see also \cite[Theorem 5.2]{K:CVPDE}),
\[
(\mu^{*,-f^-})_c=(\mu^{+}_c)^{*,-f^-}-\mu_c^-,\quad  (-\mu)^{*,\widetilde{f^+}}_c=(\mu^{-}_c)^{*,\widetilde{f^+}}-\mu^+_c,
\]
which implies that
\[
(\mu^{+}_c)^{*,-f^-}\le \nu_c^+\le \mu_c^+,\quad (\mu^{-}_c)^{*,\widetilde{f^+}}\le \nu_c^-\le \mu_c^-.
\]
Since $u$ solves \eqref{eq2.2} we have  $\nu\in \GG(f)$. Hence $\nu\in\GG(-f^-)$ and $-\nu\in\GG(\widetilde{f^+})$
(see the comments preceding the theorem).
Therefore, by \cite[Theorem 4.6']{BMP} (see also \cite[Theorem 5.11]{K:CVPDE}), $\nu^+_c\in\GG(-f^-)$ and $\nu^-_c\in\GG(\widetilde{f^+})$.
As a result,
\[
\nu_c^+=(\mu^{+}_c)^{*,-f^-},\quad \nu_c^-=(\mu^{-}_c)^{*,\widetilde{f^+}}.
\]
Hence, by \eqref{eq4.2},
\[
\nu=\nu_d+\nu_c=\mu_d+\nu_c=\mu_d+(\mu^{+}_c)^{*,-f^-}-(\mu^{-}_c)^{*,\widetilde{f^+}}=\Pi_f(\mu).
\]
This concludes the proof of the theorem.
\end{proof}

\subsection*{Acknowledgements}
{\small This work was supported by Polish National Science Centre
(Grant No. 2017/25/B/ST1/00878).}


\begin{thebibliography}{32}
\bibitem{BLO}
Bartolucci, D., Leoni, F.,
Orsina, L.,  Ponce, A. C.:
Semilinear equations with exponential nonlinearity and measure data.
{\em Ann. Inst. H. Poincar\'e Anal. Non Lin\'eaire} {\bf 22} (2005) 799--815.

\bibitem{BB}
B\'enilan, P, Brezis, H.: Nonlinear problems related to the
Thomas-Fermi equation. {\em J. Evol. Equ.} {\bf 3}, 673--770  (2004)


\bibitem{BMP1}
Brezis, H., Marcus, M., Ponce, A. C.: A new concept of reduced
measure for nonlinear elliptic equations. {\em C. R. Math. Acad. Sci.}
{\bf339}, 169--174 (2004)

\bibitem{BMP}
Brezis, H., Marcus, M., Ponce, A. C.: Nonlinear elliptic equations
with measures revisited. In: Mathematical Aspects of Nonlinear
Dispersive Equations (J. Bourgain, C. Kenig, S. Klainerman, eds.),
Annals of Mathematics Studies, {\bf 163}, Princeton University
Press, Princeton, NJ, 55--110 (2007)


\bibitem{BP}
Brezis, H., Ponce, A. C.:  Reduced measures for obstacle problems.
{\em Adv. Differential Equations} {\bf 10}  1201--1234 (2005)


\bibitem{BS}
Brezis, H., Strauss, W. A.: Semilinear second-order elliptic
equations in $L^1$. {\em J. Math. Soc. Japan} {\bf 25}, 565--590 (1973)

\bibitem{DPP}
Dupaigne, L., Ponce, A. C., Porretta, A.:
Elliptic equations with vertical asymptotes in the nonlinear term.
{\em J. Anal. Math.} {\bf 98} (2006) 349--396.


\bibitem{K:CVPDE}
Klimsiak, T.: Reduced measures for semilinear elliptic equations involving Dirichlet operators. 
{\em  Calc. Var. Partial Differential Equations} {\bf 55} (2016) Art. 78, 27 pp.

\bibitem{K:arx}
Klimsiak, T.:
Nonlinear elliptic equations with integro-differential divergence form operators and measure data under sign condition on the nonlinearity.
arXiv:2111.04393

\bibitem{KR:JFA}
Klimsiak, T., Rozkosz, A.: Dirichlet forms and
semilinear elliptic equations with measure data. 
{\em J. Funct. Anal.} {\bf 265} (2013) 890--925.

\bibitem{LPY}
Lin, C-S.,  Ponce, A. C.,  Yang, Yi.:  A system of elliptic equations arising in Chern-Simons field theory.
{\em J. Funct. Anal.} {\bf 247}  (2007) 289--350.


\bibitem{Konishi}
Konishi, Y.: Une remarque sur la perturbation d'op\'erateurs
$m$-accr\'etifs dans un espace de Banach. {\em Proc. Japan Acad.} {\bf
48}, 157--160 (1972)

\bibitem{MP}
M. Marcus, A.C. Ponce,  Reduced limits for nonlinear equations
with measures, {\em J. Funct. Anal.} {\bf 258} (2010) 2316--2372.

\bibitem{MV}
Marcus, M., V\'eron, L.: Nonlinear Second Order Elliptic Equations Involving Measures, De Gruyter, Berlin/Boston (2014)

\bibitem{Meyer}
Meyer, P.A.: Sur le lemme de La Vall\'ee Poussin et un th\'eor\`eme de Bismut. 
{\em S\'eminaire de probabilit\'es} (Strasbourg), tome 12 (1978) 770--774

\bibitem{Ponce}
Ponce, A. C.: Elliptic PDEs, Measures and Capacities: From the Poisson equation to Nonlinear Thomas-Fermi Problems.
European Mathematical Society (2017) Tracts in Mathematics 23.

\bibitem{PP}
Ponce, A. C., Presoto, A. E.: Limit solutions of the Chern--Simons equation.
{\em Nonlinear Anal.} {\bf 84} (2013) 91--102









































\end{thebibliography}
\end{document}